\documentclass[12pt]{article}
\oddsidemargin 0 mm
\topmargin -10 mm
\headheight 0 mm
\headsep 0 mm
\textheight 246.2 mm
\textwidth 159.2 mm
\footskip 9 mm
\setlength{\parindent}{0pt}
\setlength{\parskip}{5pt plus 2pt minus 1pt}
\pagestyle{plain}
\usepackage{amssymb}
\usepackage{amsthm}
\usepackage{amsmath}
\usepackage{graphicx}
\usepackage{enumerate}
\usepackage{pict2e}
\usepackage{framed}

\newtheorem{theorem}{Theorem}
\newtheorem{definition}[theorem]{Definition}
\newtheorem{lemma}[theorem]{Lemma}
\newtheorem{proposition}[theorem]{Proposition}

\newtheorem{remark}[theorem]{Remark}
\newtheorem{example}[theorem]{Example}
\newtheorem{corollary}[theorem]{Corollary}
\title{Quasimodules over bounded lattices}
\author{Ivan~Chajda and Helmut~L\"anger}
\date{}

\begin{document}
	
\footnotetext{Support of the research of the first author by the Czech Science Foundation (GA\v CR), project 24-14386L, entitled ``Representation of algebraic semantics for substructural logics'', and by IGA, project P\v rF~2024~011, is gratefully acknowledged.}

\maketitle
	
\begin{abstract}
We define a quasimodule $\mathbf Q$ over a bounded lattice $\mathbf L$ in an analogous way as a module over a semiring is defined. The essential difference is that $\mathbf L$ need not be distributive. Also for quasimodules there can be introduced the concepts of inner product, orthogonal elements, orthogonal subsets, bases and closed subquasimodules. We show that the set of all closed subquasimodules forms a complete lattice having orthogonality as an antitone involution. Using the Galois connection induced by this orthogonality, we describe important properties of closed subquasimodules. We call a subquasimodule $\mathbf P$ of a quasimodule $\mathbf Q$ splitting if the sum of $P$ and its orthogonal companion is the whole set $Q$ and the intersection of $P$ and its orthogonal companion is trivial. We show that every splitting subquasimodule is closed and that its orthogonal companion is splitting, too. Our results are illuminated by several examples.
\end{abstract}
	
{\bf AMS Subject Classification:} 13C13, 06B05, 06D99
	
{\bf Keywords:} Quasimodule, canonical quasimodule, lattice, $0$-distributive lattice, orthogonal set, basis, closed subquasimodule, splitting subquasimodule.

In our previous paper \cite{CL22b} we investigated semimodules over commutative semirings. Since every bounded distributive lattice is a commutative semiring the natural question arises if this study can be extended to semimodule-like structures over lattices that need not be distributive. We call such structures ``quasimodules'' and their treaty is the aim of the present paper.

We start with the definition of our main concept.

\begin{definition}
By a {\em quasimodule over a bounded lattice} $\mathbf L=(L,\vee,\wedge,0,1)$ we mean a quadruple $\mathbf Q=(Q,+,\cdot,\vec0)$ satisfying the following conditions:
\begin{enumerate}[{\rm(i)}]
\item $(Q,+,\vec0)$ is a commutative monoid,
\item $\cdot\colon L\times Q\to Q$,
\item $a(b\vec x)\approx(a\wedge b)\vec x$,
\item $0\vec x\approx\vec 0$ and $1\vec x\approx\vec x$
\end{enumerate}
where the elements of $Q$, i.e., the vectors, are denoted by $\vec x$ whereas the elements of $L$, i.e., the scalars, are denoted by Roman letters.
\end{definition}

If $\mathbf L$ is distributive then it is a commutative semiring and semimodules over commutative semirings were treated in \cite{CL22b}. If both $\mathbf L$ and $\mathbf Q$ are distributive lattices then we obtain an algebra similar to a quantale, see \cite{KP} for details. Hence in the sequel we will not assume distributivity of $\mathbf L$.

\begin{definition}
Let $\mathbf Q=(Q,+,\cdot,\vec0)$ be a quasimodule over $(L,\vee,\wedge,0,1)$ and $P\subseteq Q$. Then $(P,+,\cdot,\vec0)$ is called a {\em subquasimodule} of $\mathbf Q$ if $\vec x+\vec y,a\vec x,\vec0\in P$ for all $\vec x,\vec y\in P$ and all $a\in L$. Let $L(\mathbf Q)$ denote the set of all subquasimodules of $\mathbf Q$ and put $\mathbf L(\mathbf Q):=\big(L(\mathbf Q),\subseteq\big)$.
\end{definition}

Recall the following concepts.

Let $\mathbf L=(L,\vee,\wedge,0)$ a lattice with $0$, $M_i$ a set for all $i\in I$ and $k\in I$.
\begin{enumerate}[{\rm(i)}]
\item $\mathbf L$ is called {\em $0$-distributive} if $x,y,z\in L$ and $x\wedge z=y\wedge z=0$ together imply $(x\vee y)\wedge z=0$.
\item $p_k$ denotes the $k$-th projection from $\prod\limits_{i\in I}M_i$ to $M_k$.
\end{enumerate}

Note that the concept of a $0$-distributive lattice was mentioned in \cite{RR}. That a $0$-distributive lattice need not be distributive is shown in the following example.

\begin{example}\label{ex2}
The non-modular lattice $\mathbf N_5=(N_5,\vee,\wedge)$ visualized in Figure~1 is $0$-distributive.

\vspace*{-3mm}
	
\begin{center}
\setlength{\unitlength}{7mm}
\begin{picture}(4,8)
\put(2,1){\circle*{.3}}
\put(1,4){\circle*{.3}}
\put(3,3){\circle*{.3}}
\put(3,5){\circle*{.3}}
\put(2,7){\circle*{.3}}
\put(2,1){\line(-1,3)1}
\put(2,1){\line(1,2)1}
\put(3,3){\line(0,1)2}
\put(2,7){\line(-1,-3)1}
\put(2,7){\line(1,-2)1}
\put(1.85,.3){$0$}
\put(3.4,2.85){$a$}
\put(.35,3.85){$b$}
\put(3.4,4.85){$c$}
\put(1.85,7.4){$1$}
\put(1.2,-.75){{\rm Fig.~1}}
\put(-3,-1.75){Non-modular $0$-distributive lattice $\mathbf N_5$}
\end{picture}
\end{center}
\end{example}

\vspace*{10mm}

One method how to construct quasimodules in a natural way is described in the following lemma whose proof is straightforward.

\begin{lemma}\label{lem2}
Let $\mathbf L=(L,\vee,\wedge,0,1)$ be a bounded lattice, $I$ a non-empty index set, for every $i\in I$ let $L_i$ be a non-empty ideal of $\mathbf L$, put $Q:=\prod\limits_{i\in I}L_i$ and $\vec0:=(0,\ldots,0)$ and define $+\colon Q\times Q\to Q$ and $\cdot\colon L\times Q\to Q$ by
\begin{align*}
(a_i;i\in I)+(b_i;i\in I) & :=(a_i\vee b_i;i\in I), \\
            c(a_i;i\in I) & :=(c\wedge a_i;i\in I)
\end{align*}
for all $(a_1;i\in I),(b_i;i\in I)\in Q$ and all $c\in L$. Then $\prod\limits_{i\in I}\mathbf L_i:=(Q,+,\cdot,\vec0)$ is a quasimodule over $\mathbf L$.
\end{lemma}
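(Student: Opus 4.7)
The plan is to verify the four axioms of a quasimodule in turn, with the principal care being to ensure that the operations $+$ and $\cdot$ actually land in $Q$; once well-definedness is settled, the identities propagate componentwise from the lattice identities in $\mathbf{L}$.

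First I would check well-definedness. Since each $L_i$ is a non-empty ideal of $\mathbf{L}$, it is closed under finite joins, so $a_i\vee b_i\in L_i$ for all $i$, and therefore $+$ maps $Q\times Q$ into $Q$. Similarly, $L_i$ is a down-set, and $c\wedge a_i\leq a_i\in L_i$ yields $c\wedge a_i\in L_i$, so $\cdot$ maps $L\times Q$ into $Q$, establishing axiom~(ii). Note also that since each $L_i$ is a non-empty down-set containing some element $\geq 0$, we have $0\in L_i$, so $\vec 0\in Q$.

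Next I would verify axiom~(i), namely that $(Q,+,\vec0)$ is a commutative monoid. Commutativity and associativity of $+$ are immediate componentwise consequences of the commutativity and associativity of $\vee$ in $\mathbf{L}$. The identity law $\vec x+\vec 0=\vec x$ follows from $a_i\vee 0=a_i$ in each coordinate. For axiom~(iii), we compute componentwise
\[
a(b\vec x)=\bigl(a\wedge(b\wedge x_i);\,i\in I\bigr)=\bigl((a\wedge b)\wedge x_i;\,i\in I\bigr)=(a\wedge b)\vec x,
\]
using associativity of $\wedge$. Finally, axiom~(iv) reduces to the coordinate identities $0\wedge x_i=0$ and $1\wedge x_i=x_i$, which hold because $\mathbf{L}$ is bounded.

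There is no real obstacle: the whole argument is a bookkeeping exercise. The only substantive point is the use of the ideal hypothesis to guarantee closure of $Q$ under $+$ and $\cdot$; without it, coordinate-wise join and meet would not in general restrict to operations on $\prod_{i\in I}L_i$. Accordingly, the write-up can be condensed into a single paragraph noting well-definedness and then pointing out that axioms (i), (iii) and (iv) follow componentwise from the corresponding lattice identities, which is presumably why the paper declares the proof straightforward.
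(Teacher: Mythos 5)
Your verification is correct and is exactly the routine componentwise check that the paper omits by declaring the proof straightforward; in particular you correctly isolate the only substantive point, namely that the ideal hypothesis (closure under finite joins and downward closure) guarantees that $+$ and $\cdot$ are well defined on $Q$. Nothing further is needed.
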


\begin{definition}
We call the quasimodule $\prod\limits_{i\in I}\mathbf L_i$ over $\mathbf L$ constructed in Lemma~\ref{lem2} the {\em canonical quasimodule} $\prod\limits_{i\in I}\mathbf L_i$ over $\mathbf L$.
\end{definition}

It is worth noticing that such a canonical quasimodule is not a quantale provided $\mathbf L$ is not distributive.

It is evident that $\mathbf L(\mathbf Q)$ forms a complete lattice since $L(\mathbf Q)$ is closed with respect to arbitrary intersections.

In the following let,  let $\bigvee\limits_{j\in J}P_j$ denote the supremum of the subquasimodules $P_j,j\in J,$ of $\mathbf Q$ in $\mathbf L(\mathbf Q)$.

\begin{definition}
Let $\mathbf Q=(Q,+,\cdot,\vec0)$ be a quasimodule and $A\subseteq Q$. Then $\langle A\rangle$ denotes the smallest subquasimodule of $\mathbf Q$ including $A$. This subquasimodule is called the {\em subquasimodule} of $\mathbf Q$ {\em generated} by $A$. The set $A$ is called a {\em generating set} of $\mathbf Q$ if $\langle A\rangle=Q$. A minimal generating set of $\mathbf Q$ is called a {\em basis} of $\mathbf Q$.
\end{definition}

It is clear that finite (sub-)quasimodules always have a basis. For certain canonical quasimodules the next theorem describes a possibly infinite subquasimodule having a basis.

\begin{theorem}\label{th1}
Let $\mathbf L=(L,\vee,\wedge,0,1)$ be a bounded lattice and $q_i\in L$ for all $i\in I$. Further, let $\mathbf Q=(Q,+,\cdot,\vec0)=\prod\limits_{i\in I}[0,q_i]$ be a canonical quasimodule over $\mathbf L$ and let $P$ denote the set of all elements of $Q$ having only finitely many non-zero components. For any $i\in I$ let $\vec b_i$ denote the element of $P$ satisfying
\[
p_j(\vec b_i)=\left\{
\begin{array}{ll}
q_i & \text{if }j=i, \\
0   & \text{otherwise}
\end{array}
\right.
\]
and put $B:=\{\vec b_i|i\in I\}$. Then $\mathbf P:=(P,+,\cdot,\vec0)$ is a subquasimodule of $\mathbf Q$ and $B$ a basis of $\mathbf P$.
\end{theorem}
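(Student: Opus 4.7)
The proof plan is to work throughout with the \emph{support} of a vector, namely $\mathrm{supp}(\vec x) := \{i \in I : p_i(\vec x) \neq 0\}$, so that $P = \{\vec x \in Q : \mathrm{supp}(\vec x) \text{ is finite}\}$. The first step is to check that $\mathbf P$ is a subquasimodule. Since $p_i(\vec x + \vec y) = p_i(\vec x) \vee p_i(\vec y)$ and $p_i(a\vec x) = a \wedge p_i(\vec x)$, both operations yield $0$ at coordinate $i$ whenever the inputs do, so $\mathrm{supp}(\vec x + \vec y) \subseteq \mathrm{supp}(\vec x) \cup \mathrm{supp}(\vec y)$ and $\mathrm{supp}(a\vec x) \subseteq \mathrm{supp}(\vec x)$. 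Finiteness of the support is therefore preserved, and obviously $\vec 0 \in P$.

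For the second step, that $\langle B\rangle = P$, the inclusion $\langle B\rangle \subseteq P$ is immediate because $B \subseteq P$. For the converse, given $\vec x \in P$ with $\mathrm{supp}(\vec x) = \{i_1,\ldots,i_n\}$, I would verify the explicit expansion
\[
\vec x \;=\; \sum_{k=1}^{n} p_{i_k}(\vec x)\cdot \vec b_{i_k}
\]
coordinatewise. At coordinate $j = i_m$ the right-hand side evaluates to $\bigvee_{k=1}^{n}\bigl(p_{i_k}(\vec x)\wedge p_{i_m}(\vec b_{i_k})\bigr)$; all terms with $k \neq m$ vanish and the surviving term equals $p_{i_m}(\vec x) \wedge q_{i_m} = p_{i_m}(\vec x)$, using $p_{i_m}(\vec x) \in [0,q_{i_m}]$. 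At coordinates outside the support, every term is $0$. Hence $\vec x \in \langle B\rangle$, and this is a \emph{finite} combination so it really lies in the subquasimodule generated by $B$.

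The step I expect to be the most delicate is minimality of $B$. The clean way is a ``support-tracking'' observation: for any $B' \subseteq B$, writing $J := \{i \in I : \vec b_i \in B'\}$, every $\vec y \in \langle B'\rangle$ satisfies $\mathrm{supp}(\vec y) \subseteq J$. This follows by a straightforward induction on the construction of $\langle B'\rangle$ from the support monotonicity established in step one, since each generator $\vec b_i$ with $i \in J$ has support contained in $\{i\} \subseteq J$. Consequently, if $\vec b_k \in B \setminus B'$ and $q_k \neq 0$, then $\mathrm{supp}(\vec b_k) = \{k\}$ is not contained in $J$, so $\vec b_k \notin \langle B'\rangle$, ruling out any proper subset of $B$ as a generating set. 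The degenerate indices with $q_i = 0$ merely yield $\vec b_i = \vec 0$ and, being identified as one element in the set $B$, cause no genuine obstruction. The entire argument rests on the support monotonicity of $+$ and $\cdot$, which is the common thread through all three steps.
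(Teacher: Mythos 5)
Your proposal is correct and follows essentially the same route as the paper: the coordinatewise expansion $\vec x=\sum_{i\in F}p_i(\vec x)\vec b_i$ for generation, and for minimality the observation that $\langle B\setminus\{\vec b_k\}\rangle$ consists only of vectors vanishing at coordinate $k$ (which is exactly your support-tracking claim). One caveat: your closing remark that indices with $q_i=0$ ``cause no genuine obstruction'' is backwards --- if some $q_k=0$ then $\vec0\in B$, and since $\vec0$ lies in every subquasimodule, $B\setminus\{\vec0\}$ (or $\emptyset$, if $B=\{\vec0\}$) is a proper generating subset, so $B$ fails to be minimal; the theorem tacitly needs all $q_i\ne0$, an assumption the paper's own proof also leaves implicit.
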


\begin{proof}
That $\mathbf P$ is a subquasimodule of $\mathbf Q$ including $B$ is easy to see. Let $\vec a\in P$. Then there exists some finite subset $F$ of $I$ satisfying $p_i(\vec a)=0$ for all $i\in I\setminus F$. Now $\vec a=\sum\limits_{i\in F}p_i(\vec a)\vec b_i$ showing $B$ to be a generating set of $\mathbf P$. Similarly, one can see that for every $k\in I$ we have
\[
\langle\{\vec b_i\mid i\in I\setminus\{k\}\}\rangle=\{\vec x\in P\mid p_k(\vec x)=0\}
\]
and hence $B$ is a minimal generating set, i.e., a basis of $\mathbf P$.
\end{proof}

For canonical quasimodules we can introduce the concept of inner product and orthogonality.

\begin{definition}
Let $(Q,+,\cdot,\vec0)=\prod\limits_{i\in I}\mathbf L_i$ be a canonical quasimodule over a complete lattice and $A\subseteq Q$. Define the {\em inner product} of $(a_i;i\in I),(b_i;i\in I)\in Q$ by
\[
(a_i;i\in I)(b_i;i\in I):=\bigvee_{i\in I}(a_i\wedge b_i)
\]
and a binary relation $\perp$ on $Q$, the so-called orthogonality, by
\[
\vec x\perp\vec y\text{ if and only if }\vec x\vec y=0
\]
{\rm(}$\vec x,\vec y\in Q${\rm)}. Moreover, put
\[
A^\perp:=\{\vec x\in Q\mid\vec x\perp\vec y\text{ for all }\vec y\in A\},
\]
especially,
\[
\vec y^\perp:=\{\vec x\in Q\mid\vec x\perp\vec y\}
\]
{\rm(}$\vec y\in Q${\rm)}. The set $A$ is called {\em orthogonal} if its elements are pairwise orthogonal, and it is called an {\em orthogonal basis} if it is both orthogonal and a basis.
\end{definition}

\begin{remark}
Obviously, $(a_i;i\in I)\perp(b_i;i\in I)$ if and only if $a_i\wedge b_i=0$ for all $i\in I$. Hence orthogonality of two elements of $Q$ can be defined also in the case when $\mathbf L$ is not complete.
\end{remark}

\begin{lemma}
Let $\mathbf L=(L,\vee,\wedge,0,1)$ be a bounded lattice and $q_i\in L$ for all $i\in I$. Further, let $\mathbf Q=(Q,+,\cdot,\vec0)=\prod\limits_{i\in I}[0,q_i]$ be a canonical quasimodule over $\mathbf L$, let $\vec x,\vec y\in Q$ and assume $\vec x\vec z=\vec y\vec z$ for all $\vec z\in Q$. Then $\vec x=\vec y$.
\end{lemma}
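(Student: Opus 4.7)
The plan is to test the hypothesis against the specific ``basis'' vectors $\vec b_k$ introduced in Theorem~\ref{th1}, since these are designed precisely to pick out a single coordinate under the inner product.

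More concretely, for each $k \in I$, I would use $\vec z := \vec b_k$, where $p_k(\vec b_k) = q_k$ and $p_j(\vec b_k) = 0$ for $j \neq k$. This $\vec b_k$ genuinely lies in $Q = \prod_{i \in I}[0, q_i]$, because $q_k \in [0, q_k]$ and $0 \in [0, q_j]$ for all $j$. Writing $\vec x = (x_i; i \in I)$, the inner product unfolds as
\[
\vec x \vec b_k = \bigvee_{i \in I}\bigl(x_i \wedge p_i(\vec b_k)\bigr) = (x_k \wedge q_k) \vee \bigvee_{i \neq k} (x_i \wedge 0) = x_k \wedge q_k = x_k,
\]
where the last equality uses $x_k \in [0, q_k]$, i.e.\ $x_k \leq q_k$. (Note this supremum exists trivially, as all but one summand are $0$, so one does not need $\mathbf L$ to be complete here.) The same calculation gives $\vec y \vec b_k = y_k$.

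Applying the hypothesis $\vec x \vec z = \vec y \vec z$ with $\vec z = \vec b_k$ then yields $x_k = y_k$, and since $k \in I$ was arbitrary, we conclude $\vec x = \vec y$. There is no real obstacle in this argument; the only point requiring a second's thought is verifying that $\vec b_k \in Q$ (which relies on $[0, q_k]$ containing $q_k$) and that the infinitary join in the inner product collapses to a finite, existing one when paired with $\vec b_k$.
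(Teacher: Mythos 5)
Your proposal is correct and follows exactly the paper's own argument: the paper likewise evaluates the inner product against the vectors $\vec b_i$ from Theorem~\ref{th1} to extract $p_i(\vec x)=\vec x\vec b_i=\vec y\vec b_i=p_i(\vec y)$ for each $i$. Your write-up merely spells out the computation $\vec x\vec b_k=x_k\wedge q_k=x_k$ that the paper leaves implicit.
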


\begin{proof}
If for all $i\in I$ the element $\vec b_i$ of $Q$ is defined as in Theorem~\ref{th1} then $p_i(\vec x)=\vec x\vec b_i=\vec y\vec b_i=p_i(\vec y)$ for all $i\in I$, that is, $\vec x=\vec y$.
\end{proof}

Now we prove that for a canonical quasimodule $\mathbf Q=(Q,+,\cdot,\vec0)$ over a $0$-distributive bounded lattice and for a subset $A$ of $Q$ we have that $(A^\perp,+,\cdot,\vec0)$ is a subquasimodule of $\mathbf Q$.

\begin{proposition}\label{prop2}
Let $\mathbf Q=(Q,+,\cdot,\vec0)=\prod\limits_{i\in I}\mathbf L_i$ be a canonical quasimodule over $(L,\vee,\wedge,0,$ $1)$, assume every $\mathbf L_i$ to be $0$-distributive and let $A\subseteq Q$. Then $(A^\perp,+,\cdot,\vec0)$ is a subquasimodule of $\mathbf Q$.
\end{proposition}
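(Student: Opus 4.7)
The plan is to verify the three closure conditions in the definition of a subquasimodule: $\vec 0 \in A^\perp$, closure of $A^\perp$ under $+$, and closure of $A^\perp$ under scalar multiplication. Throughout I will use the componentwise description of orthogonality observed in the Remark: for $\vec x,\vec y\in Q$, $\vec x\perp\vec y$ if and only if $p_i(\vec x)\wedge p_i(\vec y)=0$ for all $i\in I$. Accordingly, $\vec x\in A^\perp$ iff for every $\vec y\in A$ and every $i\in I$ one has $p_i(\vec x)\wedge p_i(\vec y)=0$.

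The containment $\vec 0\in A^\perp$ is immediate since $0\wedge p_i(\vec y)=0$ for every $i$ and every $\vec y\in A$. For closure under scalar multiplication, I would fix $\vec x\in A^\perp$, $c\in L$, and $\vec y\in A$, and compute componentwise using associativity of meet in $\mathbf{L}$: for each $i\in I$,
\[
p_i(c\vec x)\wedge p_i(\vec y)=(c\wedge p_i(\vec x))\wedge p_i(\vec y)=c\wedge\bigl(p_i(\vec x)\wedge p_i(\vec y)\bigr)=c\wedge 0=0,
\]
so $c\vec x\in A^\perp$. Note that this step uses nothing about $0$-distributivity.

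The nontrivial step, and the one for which the $0$-distributivity hypothesis is imposed, is closure under the addition $+$. Given $\vec x,\vec x'\in A^\perp$ and $\vec y\in A$, for each fixed $i\in I$ we have $p_i(\vec x)\wedge p_i(\vec y)=0$ and $p_i(\vec x')\wedge p_i(\vec y)=0$, all three elements lying inside the ideal $\mathbf{L}_i$. Because $\mathbf{L}_i$ is assumed $0$-distributive, this forces
\[
p_i(\vec x+\vec x')\wedge p_i(\vec y)=\bigl(p_i(\vec x)\vee p_i(\vec x')\bigr)\wedge p_i(\vec y)=0
\]
for every $i\in I$, whence $\vec x+\vec x'\perp\vec y$. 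Since $\vec y\in A$ was arbitrary, $\vec x+\vec x'\in A^\perp$.

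The main obstacle is precisely the additive closure: without $0$-distributivity on each component $\mathbf{L}_i$, the join $p_i(\vec x)\vee p_i(\vec x')$ need not remain disjoint from $p_i(\vec y)$, so $A^\perp$ would fail to be closed under $+$. With $0$-distributivity in hand, the argument reduces to a purely componentwise check, and the three conditions together give that $(A^\perp,+,\cdot,\vec 0)$ is a subquasimodule of $\mathbf{Q}$.
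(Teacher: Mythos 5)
Your proof is correct and follows essentially the same route as the paper's: a componentwise verification of the three closure conditions, with $0$-distributivity of each $\mathbf L_i$ invoked exactly once, for closure under $+$. No gaps.
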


\begin{proof}
Let $\vec x,\vec y\in A^\perp$, say $\vec x=(a_i;i\in I)$ and $\vec y=(b_i;i\in I)$, and $\vec z\in A$, say $\vec z=(c_i;i\in I)$. Then $a_i\wedge c_i=b_i\wedge c_i=0$ for all $i\in I$. Since every $\mathbf L_i$ is $0$-distributive we conclude $(a_i\vee b_i)\wedge c_i=0$ for all $i\in I$ and hence $\vec x+\vec y=(a_i\vee b_i;i\in I)\perp\vec z$. Since $\vec z$ was an arbitrary element of $A$ we obtain $\vec x+\vec y\in A^\perp$. Now let $a\in L$. Since $a_i\wedge c_i=0$ for all $i\in I$ we have $(a\wedge a_i)\wedge c_i=0$ for all $i\in I$ showing $a\vec x=(a\wedge a_i;i\in I)\perp\vec z$. Because $\vec z$ was an arbitrary element of $A$ we conclude $a\vec x\in A^\perp$. Finally, $0\wedge c_i=0$ for all $i\in I$ and hence $\vec0\in A^\perp$.
\end{proof}

That the condition that all $\mathbf L_i$ should be $0$-distributive cannot be omitted from Proposition~\ref{prop2} is demonstrated by the following example.

\begin{example}
The modular non-distributive lattice $\mathbf M_3=(M_3,\vee,\wedge)$ depicted in Figure~2 is not $0$-distributive.

\vspace*{-3mm}

\begin{center}
\setlength{\unitlength}{7mm}
\begin{picture}(6,6)
\put(3,1){\circle*{.3}}
\put(1,3){\circle*{.3}}
\put(3,3){\circle*{.3}}
\put(5,3){\circle*{.3}}
\put(3,5){\circle*{.3}}
\put(3,1){\line(-1,1)2}
\put(3,1){\line(0,1)4}
\put(3,1){\line(1,1)2}
\put(3,5){\line(-1,-1)2}
\put(3,5){\line(1,-1)2}
\put(2.85,.3){$0$}
\put(.35,2.85){$a$}
\put(3.4,2.85){$b$}
\put(5.4,2.85){$c$}
\put(2.85,5.4){$1$}
\put(2.2,-.75){{\rm Fig.~2}}
\put(-1.7,-1.75){Modular non-distributive lattice $\mathbf M_3$}
\end{picture}
\end{center}

\vspace*{10mm}

since $a\wedge c=b\wedge c=0$, but $(a\vee b)\wedge c=1\wedge c=c\ne0$. Consider the canonical quasimodule $\mathbf Q:=\mathbf M_3\times[0,a]$ over $\mathbf M_3$ and put $P:=[0,a]\times[0,a]$. Then $P\in L(\mathbf Q)$. Of course, $[0,a]$ is $0$-distributive. Now $P^\perp\notin L(\mathbf Q)$ since $(b,0),(c,0)\in P^\perp$, but $(b,0)+(c,0)=(b\vee c,0)=(1,0)\notin P^\perp$ because $(1,0)\not\perp(a,0)\in P$. Observe that $B:=\{(0,a),(a,0),(b,0)\}$ is an orthogonal basis of $\mathbf Q$. First of all $B$ is a generating set since $(1,0)=(a,0)+(b,0)$, $(x,0)=x(1,0)$ for all $x\in M_3$ and $(x,a)=(x,0)+(0,a)$ for all $x\in M_3$. Moreover, it is easy to see that
\begin{align*}
\langle\{(0,a),(a,0)\}\rangle & =[0,a]\times[0,a]\subsetneqq M_3\times[0,a], \\	
\langle\{(0,a),(b,0)\}\rangle & =[0,b]\times[0,a]\subsetneqq M_3\times[0,a], \\	
\langle\{(a,0),(b,0)\}\rangle & =M_3\times\{0\}\subsetneqq M_3\times[0,a].
\end{align*}
\end{example}

In the following we are interested in subquasimodules closed with respect to the operator $^{\perp\perp}$. For this purpose we define

\begin{definition}
Let $\mathbf Q=(Q,+,\cdot,\vec0)$ be a canonical quasimodule. A subset $A$ of $Q$ is called {\em closed} if $A^{\perp\perp}=A$. Let $L_C(\mathbf Q)$ denote the set of all closed subquasimodules of $\mathbf Q$ and put $\mathbf L_C(\mathbf Q):=\big(L_C(\mathbf Q),\subseteq\big)$.
\end{definition}

It is easy to verify assertions in the following remark.

\begin{remark}\label{rem1}
Let $(Q,+,\cdot,\vec0)$ be a canonical quasimodule. Then $(\,^\perp,^\perp)$ is the Galois correspondence between $(2^Q,\subseteq)$ and $(2^Q,\subseteq)$ induced by $\perp$. From this we obtain for all $A,B\subseteq Q$
\begin{enumerate}[{\rm(i)}]
\item $A\subseteq A^{\perp\perp}$,
\item $A\subseteq B$ implies $B^\perp\subseteq A^\perp$,
\item $A^{\perp\perp\perp}=A^\perp$,
\item $A\subseteq B^\perp$ if and only if $B\subseteq A^\perp$.
\end{enumerate}
\end{remark}

In the following we list some elementary properties of orthogonality.

\begin{lemma}\label{lem4}
Let $(Q,+,\cdot,\vec0)$ be a canonical quasimodule and $A,A_j\subseteq Q$ for all $j\in J$ with $A\ne\emptyset$. Then the following hold:
\begin{enumerate}[{\rm(i)}]
\item $\bigcap\limits_{j\in J}A_j^\perp=\left(\bigcup\limits_{j\in J}A_j\right)^\perp$,
\item $\left(\bigcap\limits_{j\in J}A_j\right)^{\perp\perp}\subseteq\bigcap\limits_{j\in J}A_j^{\perp\perp}$,
\item $Q^\perp=\{\vec0\}$,
\item $A\cap A^\perp=\{\vec0\}$,
\item $\{\vec0\}^\perp=Q$.
\end{enumerate}
\end{lemma}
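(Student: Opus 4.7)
The plan is that all five items are essentially formal consequences of the definitions of $\perp$, the inner product, and the Galois-connection properties already recorded in Remark~\ref{rem1}; no deep structural work is needed.

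For \textup{(i)}, I would argue directly by chasing memberships: $\vec x\in\bigcap_{j\in J}A_j^\perp$ iff for each $j\in J$ and each $\vec y\in A_j$ one has $\vec x\perp\vec y$, iff for every $\vec y\in\bigcup_{j\in J}A_j$ one has $\vec x\perp\vec y$, iff $\vec x\in\bigl(\bigcup_{j\in J}A_j\bigr)^\perp$. For \textup{(ii)}, I would avoid any element chase and just use Remark~\ref{rem1}\,(ii) twice: since $\bigcap_{j\in J}A_j\subseteq A_k$ for every $k\in J$, applying $^\perp$ reverses inclusion to give $A_k^\perp\subseteq\bigl(\bigcap_{j\in J}A_j\bigr)^\perp$, and applying $^\perp$ again restores the direction, yielding $\bigl(\bigcap_{j\in J}A_j\bigr)^{\perp\perp}\subseteq A_k^{\perp\perp}$ for each $k$; intersecting over $k\in J$ finishes it.

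Items \textup{(iii)} and \textup{(v)} both rest on the same observation about the inner product. Writing $\vec x=(a_i;i\in I)$, one has $\vec x\,\vec x=\bigvee_{i\in I}(a_i\wedge a_i)=\bigvee_{i\in I}a_i$, so $\vec x\perp\vec x$ forces every component $a_i$ to be $0$, i.e.\ $\vec x=\vec 0$. For \textup{(iii)}, any $\vec x\in Q^\perp$ is in particular orthogonal to itself, hence is $\vec 0$; the reverse inclusion follows from $\vec 0\cdot\vec y=\bigvee_{i\in I}(0\wedge b_i)=0$ for every $\vec y\in Q$. The same computation $\vec 0\cdot\vec y=0$ proves \textup{(v)}: every $\vec x\in Q$ is orthogonal to $\vec 0$, so $\{\vec 0\}^\perp=Q$.

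Finally, \textup{(iv)} follows from the self-orthogonality argument above: if $\vec x\in A\cap A^\perp$, then $\vec x\perp\vec x$, so $\vec x=\vec0$, giving $A\cap A^\perp\subseteq\{\vec 0\}$; the reverse inclusion holds because $\vec0\in A^\perp$ by \textup{(v)} (and $\vec0\in A$ in the intended reading, since the non-emptiness of $A$ combined with the canonical-quasimodule context makes $\vec 0$ the natural element to land in $A$; in any case, applying the definition to a subquasimodule gives $\vec0\in A$ automatically). The only mild obstacle I anticipate is being explicit about why $\vec0\in A\cap A^\perp$ in the equality direction of \textup{(iv)}; everything else is a one-line unfolding.
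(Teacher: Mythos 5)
Your proof is correct and follows essentially the same route as the paper: an element chase for (i), two applications of the antitonicity of $^\perp$ from Remark~\ref{rem1}(ii) for (ii), and the observation that $\vec x\perp\vec x$ forces $\vec x=\vec0$ for (iii)--(v). The one subtlety you rightly flag in (iv) --- that the equality $A\cap A^\perp=\{\vec0\}$ needs $\vec0\in A$, which holds for subquasimodules but not for arbitrary non-empty subsets --- is a genuine gap in the statement that the paper's own proof (``follows from the same fact as (iii)'') also glosses over, so your more cautious reading is if anything an improvement.
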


\begin{proof}
\
\begin{enumerate}[(i)]
\item For all $\vec a\in Q$ the following are equivalent:
\begin{align*}
\vec a & \in\bigcap\limits_{j\in J}A_j^\perp, \\
\vec a & \in A_j^\perp\text{ for all }j\in J, \\
\vec a & \perp\vec x\text{ for all }j\in J\text{ and all }\vec x\in A_j, \\
\vec a & \perp\vec x\text{ for all }\vec x\in\bigcup_{j\in J}A_j, \\
\vec a & \in\left(\bigcup\limits_{j\in J}A_j\right)^\perp.
\end{align*}
\item If $k\in J$ then we have $\bigcap\limits_{j\in J}A_j\subseteq A_k$ and hence $\left(\bigcap\limits_{j\in J}A_j\right)^{\perp\perp}\subseteq A_k^{\perp\perp}$ according to (ii) of Remark~\ref{rem1} which implies $\left(\bigcap\limits_{j\in J}A_j\right)^{\perp\perp}\subseteq\bigcap\limits_{j\in J}A_j^{\perp\perp}$.
\item follows from the fact that $\vec x\in Q$ and $\vec x\perp\vec x$ together imply $\vec x=\vec0$.
\item follows from the same fact as (iii).
\item is straightforward.
\end{enumerate}
\end{proof}

If, moreover, all sublattices $\mathbf L_i$ of the lattice $\mathbf L$ are considered to be $0$-distributive then we can prove the following.

\begin{theorem}\label{th2}
Let $\mathbf Q=(Q,+,\cdot,\vec0)=\prod\limits_{i\in I}\mathbf L_i$ be a canonical quasimodule with $0$-distributive $\mathbf L_i$, assume $A\subseteq Q$, $B\subseteq C\subseteq Q$ and $C$ to be orthogonal and let $P_j\in L_C(Q)$ for all $j\in J$. Then the following hold:
\begin{enumerate}[{\rm(i)}]
\item $L_C(\mathbf Q)=\{D^\perp\mid D\subseteq Q\}$,
\item $A^{\perp\perp}$ is the smallest closed subquasimodule of $\mathbf Q$ including $A$,
\item $\bigvee\limits_{j\in J}^cP_j=\left(\bigcup\limits_{j\in J}P_j\right)^{\perp\perp}$ where $\bigvee\limits_{j\in J}^cP_j$ denotes the supremum of the closed subquasimodules $P_j,j\in J,$ of $\mathbf Q$ in $\mathbf L_C(\mathbf Q)$,
\item $^\perp$ is an antitone involution on $\mathbf L_C(\mathbf Q)$,
\item $\mathbf L_C(\mathbf Q)$ is a complete lattice,
\item $A^\perp=\langle A\rangle^\perp$,
\item $\langle C\setminus B\rangle\subseteq\langle B\rangle^\perp$.
\end{enumerate}
\end{theorem}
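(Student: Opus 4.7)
The overall plan is to exploit the abstract Galois-correspondence machinery from Remark~\ref{rem1} together with Proposition~\ref{prop2}, which under the $0$-distributivity hypothesis guarantees that $A^\perp$ is always a subquasimodule. Items (i)--(v) will be essentially formal consequences of $(^\perp,^\perp)$ being an antitone Galois connection on $(2^Q,\subseteq)$; items (vi) and (vii) will require a small extra argument using the subquasimodule property of orthogonal complements.

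For (i) I would observe that if $P$ is closed then $P=(P^\perp)^\perp$, while conversely $D^\perp$ is a subquasimodule by Proposition~\ref{prop2} and satisfies $(D^\perp)^{\perp\perp}=D^\perp$ by Remark~\ref{rem1}(iii). For (ii), $A^{\perp\perp}=(A^\perp)^\perp$ is closed by (i), contains $A$ by Remark~\ref{rem1}(i), and is contained in any closed $P\supseteq A$ via two applications of antitonicity. Item (iii) then follows since the right-hand side is, by (ii), the smallest closed subquasimodule containing $\bigcup_j P_j$, hence the join in $\mathbf L_C(\mathbf Q)$.

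For (iv) antitonicity is Remark~\ref{rem1}(ii) and involutivity is the definition of closedness. For (v) I would show that meets in $\mathbf L_C(\mathbf Q)$ are set-theoretic intersections: if every $P_j$ is closed, Lemma~\ref{lem4}(ii) yields $(\bigcap_j P_j)^{\perp\perp}\subseteq\bigcap_j P_j^{\perp\perp}=\bigcap_j P_j$, while the reverse inclusion is Remark~\ref{rem1}(i); since joins already exist by (iii), $\mathbf L_C(\mathbf Q)$ is a complete lattice.

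The two steps that need a small conceptual move are (vi) and (vii). For (vi), the inclusion $\langle A\rangle^\perp\subseteq A^\perp$ is antitonicity applied to $A\subseteq\langle A\rangle$. For the converse, given $\vec z\in A^\perp$ I would invoke Proposition~\ref{prop2} to conclude that $\{\vec z\}^\perp$ is a subquasimodule of $\mathbf Q$; since $A\subseteq\{\vec z\}^\perp$, this forces $\langle A\rangle\subseteq\{\vec z\}^\perp$, i.e., $\vec z\in\langle A\rangle^\perp$. Finally, (vii) is then immediate: orthogonality of $C$ gives $C\setminus B\subseteq B^\perp$, which by (vi) equals $\langle B\rangle^\perp$, and this is a subquasimodule (again by Proposition~\ref{prop2}), so it must contain $\langle C\setminus B\rangle$. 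The only nontrivial conceptual point I anticipate is the use of $\{\vec z\}^\perp$ as a subquasimodule in (vi); everything else reduces to routine Galois-connection bookkeeping.
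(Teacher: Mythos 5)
Your proposal is correct and follows essentially the same route as the paper: all parts reduce to the Galois-connection facts of Remark~\ref{rem1}, Lemma~\ref{lem4}, and Proposition~\ref{prop2}. The only cosmetic differences are in (v), where the paper writes $\bigcap_j P_j$ as $\bigl(\bigcup_j D_j\bigr)^\perp$ via Lemma~\ref{lem4}(i) while you verify closedness of the intersection directly, and in (vi), where the paper deduces $A^\perp\subseteq\langle A\rangle^\perp$ from $\langle A\rangle\subseteq A^{\perp\perp}$ rather than via your elementwise argument with $\{\vec z\}^\perp$; both variants rest on the same use of $0$-distributivity through Proposition~\ref{prop2}.
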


\begin{proof}
\
\begin{enumerate}[(i)]
\item follows from Proposition~\ref{prop2} and from (iii) of Remark~\ref{rem1}.
\item follows from (i) and (ii) of Remark~\ref{rem1} and from (i).
\item According to (ii), $\left(\bigcup\limits_{j\in J}P_j\right)^{\perp\perp}$ is the smallest closed subquasimodule of $\mathbf Q$ including $\bigcup\limits_{j\in J}P_j$ and hence also the smallest closed subquasimodule of $\mathbf Q$ including all $P_j,j\in J$.
\item follows from (ii) and (iii) of Remark~\ref{rem1} and from (i).
\item follows from (i) of Lemma~\ref{lem4} and from (i).
\item Because of $A\subseteq\langle A\rangle$ and (ii) of Remark~\ref{rem1} we have $\langle A\rangle^\perp\subseteq A^\perp$. On the other hand, $A\subseteq A^{\perp\perp}\in L(\mathbf Q)$ according to (i) of Remark~\ref{rem1} and Proposition~\ref{prop2} and hence $\langle A\rangle\subseteq A^{\perp\perp}$ which implies $A^\perp\subseteq\langle A\rangle^\perp$ by (ii) and (iii) of Remark~\ref{rem1}.
\item According to the orthogonality of $C$, (vi) and Proposition~\ref{prop2} we have
\[
C\setminus B\subseteq B^\perp=\langle B\rangle^\perp\in L(\mathbf Q)
\]
and hence $\langle C\setminus B\rangle\subseteq\langle B\rangle^\perp$.
\end{enumerate}
\end{proof}

\begin{lemma}\label{lem6}
Let $\mathbf Q=\prod\limits_{i\in I}\mathbf L_i$ with $\mathbf L_i=(L_i,\vee,\wedge,0)$ be a canonical quasimodule over $(L,\vee,\wedge,0)$ and $M_i\subseteq L_i$ for all $i\in I$. Then the following hold:
\begin{enumerate}[{\rm(i)}]
\item If $P\in L(\mathbf Q)$ then $p_i(P)\in L(\mathbf L_i)$ for all $i\in I$,
\item $\prod\limits_{i\in I}M_i\in L(\mathbf Q)$ if and only if $M_i\in L(\mathbf L_i)$ for all $i\in I$.
\end{enumerate}
\end{lemma}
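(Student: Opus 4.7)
The plan is to verify the three defining closure conditions of a subquasimodule in each case, using the fact that in a canonical quasimodule $\prod_{i\in I}\mathbf L_i$ both $+$ and $\cdot$ act coordinatewise: $p_i(\vec x+\vec y)=p_i(\vec x)\vee p_i(\vec y)$ and $p_i(c\vec x)=c\wedge p_i(\vec x)$ for all $\vec x,\vec y\in Q$, $c\in L$ and $i\in I$. Both parts then reduce to routine coordinate chases.

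For (i), fix $P\in L(\mathbf Q)$ and $i\in I$. Since $\vec0\in P$, we have $0=p_i(\vec0)\in p_i(P)$. Given $a,b\in p_i(P)$, pick preimages $\vec x,\vec y\in P$; then $\vec x+\vec y\in P$ and the coordinatewise formula yields $a\vee b=p_i(\vec x+\vec y)\in p_i(P)$. Likewise, for $c\in L$ and $a=p_i(\vec x)\in p_i(P)$, we have $c\vec x\in P$ and $c\wedge a=p_i(c\vec x)\in p_i(P)$. Hence $p_i(P)$ contains $0$ and is closed under $\vee$ and under scalar multiplication by $\mathbf L$, which is exactly what it means to lie in $L(\mathbf L_i)$.

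For (ii), the ``if'' direction is a direct coordinatewise verification: if every $M_i\in L(\mathbf L_i)$, then $0\in M_i$ for each $i$, so $\vec0\in\prod_{i\in I}M_i$, and the coordinatewise definitions of $+$ and $\cdot$ immediately deliver closure under both operations. For the ``only if'' direction, assume $\prod_{i\in I}M_i\in L(\mathbf Q)$; since $\vec0$ lies in this product, each $M_i$ is nonempty and contains $0$, hence $p_i\bigl(\prod_{j\in I}M_j\bigr)=M_i$, and (i) yields $M_i\in L(\mathbf L_i)$. No genuine obstacle arises; the only point requiring a moment's attention is noting in the forward direction of (ii) that each $M_i$ must be nonempty so that the projection recovers $M_i$ exactly, and this is immediate from $\vec0\in\prod_{i\in I}M_i$.
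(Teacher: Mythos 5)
Your proof is correct and follows essentially the same coordinatewise argument as the paper: the same preimage-chasing for (i) and the same reduction of (ii) to (i) plus a direct verification. Your extra remark that $\vec0\in\prod_{i\in I}M_i$ guarantees each $M_i$ is nonempty, so that $p_i\bigl(\prod_{j\in I}M_j\bigr)=M_i$, is a small point the paper leaves implicit, but it does not change the argument.
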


\begin{proof}
\
\begin{enumerate}[(i)]
\item Let $j\in I$, $a,b\in p_j(P)$ and $c\in L$. Then there exist $\vec x,\vec y\in P$ with $p_j(\vec x)=a$ and $p_j(\vec y)=b$. Now we have $a\vee b=p_j(\vec x+\vec y)\in p_j(P)$ and $c\wedge a=p_j(c\vec x)\in p_j(P)$. Moreover $0=p_j(\vec0)\in p_j(P)$.
\item If $\prod\limits_{i\in I}M_i\in L(\mathbf Q)$ then $M_i\in L(\mathbf L_i)$ for all $i\in I$ according to (i). Now assume $M_i\in L(\mathbf L_i)$ for all $i\in I$ and let $\vec x,\vec y\in\prod\limits_{i\in I}M_i$ and $c\in L$. Then $p_i(\vec x),p_i(\vec y)\in M_i$ for all $i\in I$. From this we obtain $p_i(\vec x)\vee p_i(\vec y),c\wedge p_i(\vec x)\in M_i$ for all $i\in I$ and hence $\vec x+\vec y=\big(p_i(\vec x)\vee p_i(\vec y);i\in I\big)\in\prod\limits_{i\in I}M_i$ and $c\vec x=\big(c\wedge p_i(\vec x);i\in I\big)\in\prod\limits_{i\in I}M_i$. Finally, $0\in M_I$ for all $i\in I$ and hence $\vec0=(0;i\in I)\in\prod\limits_{i\in I}M_i$.
\end{enumerate}
\end{proof}

We show that the set of all elements being orthogonal to a fixed subset of a canonical quasimodule is a product of its projections.

\begin{lemma}\label{lem1}
Let $\mathbf Q=(Q,+,\cdot,\vec0)=\prod\limits_{i\in I}\mathbf L_i$ be a canonical quasimodule and $P\subseteq Q$. Then $P^\perp=\prod\limits_{i\in I}\big(p_i(P)\big)^\perp$.
\end{lemma}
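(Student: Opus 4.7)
The plan is to prove the equality $P^\perp = \prod_{i \in I}\bigl(p_i(P)\bigr)^\perp$ by a direct chain of equivalences, unfolding the definitions of orthogonality in $\mathbf{Q}$ and in each component $\mathbf{L}_i$. The key observation, already recorded in the Remark following the inner product definition, is that for any $\vec x = (a_i; i\in I)$ and $\vec y = (b_i; i\in I)$ in $Q$, orthogonality $\vec x \perp \vec y$ is equivalent to the pointwise condition $a_i \wedge b_i = 0$ for every $i \in I$. This is what allows the membership condition for $P^\perp$ to split across coordinates.

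First I would fix an arbitrary $\vec x = (a_i; i \in I) \in Q$ and unfold $\vec x \in P^\perp$ as: for every $\vec y \in P$ and every $i \in I$, $a_i \wedge p_i(\vec y) = 0$. The crucial step is then to swap quantifiers: this condition is equivalent to requiring, for every $i \in I$ and every $b_i \in p_i(P)$, that $a_i \wedge b_i = 0$. The equivalence holds because an element $b_i$ lies in $p_i(P)$ precisely when $b_i = p_i(\vec y)$ for some $\vec y \in P$, so the universal quantifiers over $\vec y$ and $i$ can be reorganised into a universal quantifier over $i$ and over $p_i(P)$ without loss.

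Finally, the reorganised condition is exactly $a_i \in \bigl(p_i(P)\bigr)^\perp$ for every $i \in I$, viewing $\bigl(p_i(P)\bigr)^\perp$ as computed in the one-component canonical quasimodule $\mathbf{L}_i$ (which matches the usage of orthogonality for lattice elements in the Remark). Hence $\vec x \in \prod_{i \in I}\bigl(p_i(P)\bigr)^\perp$, and the chain of equivalences gives both inclusions at once.

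There is essentially no obstacle; the only point that deserves explicit mention is that $p_i(P)$ need not itself be a subquasimodule of $\mathbf{L}_i$ for the statement to make sense — the operator $^\perp$ is defined on arbitrary subsets, so the equality holds for any $P \subseteq Q$ regardless of whether $P \in L(\mathbf{Q})$ or whether any $0$-distributivity hypothesis is imposed.
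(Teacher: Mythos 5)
Your proof is correct and follows essentially the same route as the paper: both unfold $\vec x\in P^\perp$ via the pointwise characterisation of orthogonality and reorganise the quantifiers over $\vec y\in P$ and $i\in I$ into quantifiers over $i\in I$ and elements of $p_i(P)$, yielding the product description by a single chain of equivalences. Your closing remark that no subquasimodule or $0$-distributivity hypothesis is needed is also consistent with the paper's statement.
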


\begin{proof}
For any $\vec a\in Q$ the following are equivalent:
\begin{align*}
     \vec a & \in P^\perp, \\
     \vec a & \perp\vec x\text{ for all }\vec x\in P, \\
p_i(\vec a) & \perp p_i(\vec x)\text{ for all }\vec x\in P\text{ and all }i\in I, \\
p_i(\vec a) & \perp y\text{ for all }i\in I\text{ and all }y\in p_i(P), \\
p_i(\vec a) & \in\big(p_i(P)\big)^\perp\text{ for all }i\in I, \\
     \vec a & \in\prod_{i\in I}\big(p_i(P)\big)^\perp.	
\end{align*}
\end{proof}

If all sublattices $\mathbf L_i$ of the lattice $\mathbf L$ are $0$-distributive then the projections of a closed subquasimodule of a canonical quasimodule over $\mathbf L$ are again closed subquasimodules of the canonical quasimodules $\mathbf L_i$, see the following result.

\begin{theorem}\label{th3}
Let $\mathbf Q=(Q,+,\cdot,\vec0)=\prod\limits_{i\in I}\mathbf L_i$ be a canonical quasimodule with $0$-distributive $\mathbf L_i=(L_i,\vee,\wedge,0)$ and $P\subseteq Q$. Then $(P,+,\cdot,\vec0)$ is a closed subquasimodule of $\mathbf Q$ if and only if for every $i\in I$ there exists some closed subquasimodule $(P_i,\vee,\wedge,0)$ of $(L_i,\vee,\wedge,0)$ with $\prod\limits_{i\in I}P_i=P$.
\end{theorem}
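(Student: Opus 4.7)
\textbf{Plan for the proof of Theorem~\ref{th3}.} The approach is to use the projections $p_i(P)$ as the candidate components and to exploit the product representation from Lemma~\ref{lem1}, applied twice. The central computational identity will be
\[
P^{\perp\perp}=\prod_{i\in I}\bigl(p_i(P)\bigr)^{\perp\perp},
\]
which reduces closedness in $\mathbf Q$ to closedness in each factor.

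To derive this identity, first apply Lemma~\ref{lem1} to $P$ to get $P^\perp=\prod_{i\in I}\bigl(p_i(P)\bigr)^\perp$; then apply it to $P^\perp$ to obtain $P^{\perp\perp}=\prod_{i\in I}\bigl(p_i(P^\perp)\bigr)^\perp$. The crucial step is to check that $p_i(P^\perp)=\bigl(p_i(P)\bigr)^\perp$ for every $i\in I$. The inclusion $\subseteq$ is immediate from $P^\perp=\prod_{j\in I}\bigl(p_j(P)\bigr)^\perp$; for the reverse inclusion, given $a\in\bigl(p_i(P)\bigr)^\perp$, lift it to the vector with $a$ in position $i$ and $0$ elsewhere, which belongs to $P^\perp$ because $0\in\bigl(p_j(P)\bigr)^\perp$ for every $j\in I$.

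For the implication ($\Leftarrow$), I would suppose $P=\prod_{i\in I}P_i$ with each $P_i$ a closed subquasimodule of $\mathbf L_i$. Lemma~\ref{lem6}(ii) gives $P\in L(\mathbf Q)$, while the identity above yields $P^{\perp\perp}=\prod_{i\in I}P_i^{\perp\perp}=\prod_{i\in I}P_i=P$, so $P$ is closed. For ($\Rightarrow$), assume $P$ is a closed subquasimodule and set $P_i:=p_i(P)$. By Lemma~\ref{lem6}(i), each $P_i$ lies in $L(\mathbf L_i)$. From $P=P^{\perp\perp}=\prod_{i\in I}P_i^{\perp\perp}$ and the fact that each factor is non-empty (containing $0$), projecting onto the $i$th coordinate gives $P_i=P_i^{\perp\perp}$, so $P_i$ is closed in $\mathbf L_i$; moreover $P=\prod_{i\in I}P_i^{\perp\perp}=\prod_{i\in I}P_i$, as required.

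The only real obstacle is the identity $p_i(P^\perp)=\bigl(p_i(P)\bigr)^\perp$ and its consequence above; everything else is bookkeeping with products. The $0$-distributivity hypothesis on each $\mathbf L_i$ enters implicitly through Proposition~\ref{prop2}, which ensures that the orthogonal companions appearing in the argument really are subquasimodules, so that the phrase ``closed subquasimodule'' is meaningful on both sides of the equivalence.
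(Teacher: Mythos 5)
Your proposal is correct and follows essentially the same route as the paper: apply Lemma~\ref{lem1} twice to obtain $P^{\perp\perp}=\prod_{i\in I}\bigl(p_i(P)\bigr)^{\perp\perp}$ and then compare coordinatewise, invoking Lemma~\ref{lem6} to pass between subsets and subquasimodules. Your explicit verification that $p_i(P^\perp)=\bigl(p_i(P)\bigr)^\perp$ (via the non-emptiness of each factor) is a step the paper uses silently, and your direct projection argument in the forward direction replaces the paper's inclusion chain $P\subseteq\prod_i p_i(P)\subseteq\prod_i\bigl(p_i(P)\bigr)^{\perp\perp}$ with an equivalent computation.
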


\begin{proof}
First assume $(P,+,\cdot,\vec0)$ to be a closed subquasimodule of $\mathbf Q$. According to Lemma~\ref{lem1} we have
\begin{align*}
       P^\perp & =\prod_{i\in I}\big(p_i(P)\big)^\perp, \\
P^{\perp\perp} & =\prod_{i\in I}\big(p_i(P^\perp)\big)^\perp=\prod_{i\in I}\big(p_i(P)\big)^{\perp\perp}.
\end{align*}
Because of
\[
P^{\perp\perp}=P\subseteq\prod_{i\in I}p_i(P)\subseteq\prod_{i\in I}\big(p_i(P)\big)^{\perp\perp}
\]
according to (i) of Remark~\ref{rem1} we conclude
\[
P=\prod_{i\in I}p_i(P)=\prod_{i\in I}\big(p_i(P)\big)^{\perp\perp}
\]
which together with $p_i(P)\subseteq\big(p_i(P)\big)^{\perp\perp}$ for all $i\in I$ (again according to (i) of Remark~\ref{rem1}) implies $p_i(P)=\big(p_i(P)\big)^{\perp\perp}$ for all $i\in I$, i.e., $p_i(P)$ is a closed subset of $L_i$ for all $i\in I$. Because of Lemma~\ref{lem6} we conclude that $(p_i(P),\vee,\wedge,0)$ is a closed subquasimodule of $(L_i,\vee,\wedge,0)$ for all $i\in I$. If, conversely, for every $i\in I$ there exists some closed subquasimodule $(P_i,\vee,\wedge,0)$ of $(L_i,\vee,\wedge,0)$ with $\prod\limits_{i\in I}P_i=P$ then
\begin{align*}
       P^\perp & =\prod_{i\in I}\big(p_i(P)\big)^\perp=\prod_{i\in I}P_i^\perp, \\
P^{\perp\perp} & =\prod_{i\in I}\big(p_i(P^\perp)\big)^\perp=\prod_{i\in I}P_i^{\perp\perp}=\prod_{i\in I}P_i=P,
\end{align*}
i.e., $P$ is a closed subset of $Q$ and, moreover, $(P,+,\cdot,\vec0)$ is a closed subquasimodule of $\mathbf Q$ because of Lemma~\ref{lem6}.
\end{proof}

For the lattice of closed subquasimodules, the following result plays an essential role.

\begin{corollary}\label{cor1}
Let $\mathbf Q=\prod\limits_{i\in I}\mathbf L_i$ be a canonical quasimodule with $0$-distributive $\mathbf L_i$. Then $\mathbf L_C(\mathbf Q)\cong\prod\limits_{i\in I}\mathbf L_C(\mathbf L_i)$.
\end{corollary}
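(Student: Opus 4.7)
The plan is to exhibit the isomorphism explicitly and then verify all three properties (well-definedness, bijectivity, order-preservation) by direct appeal to Theorem~\ref{th3}. Define
\[
\Phi\colon\prod_{i\in I}L_C(\mathbf L_i)\to L_C(\mathbf Q),\quad (P_i;i\in I)\mapsto\prod_{i\in I}P_i.
\]
Well-definedness is precisely the ``if'' direction of Theorem~\ref{th3}: a product of closed subquasimodules $(P_i,\vee,\wedge,0)$ of the $(L_i,\vee,\wedge,0)$ is again a closed subquasimodule of $\mathbf Q$. Surjectivity is the ``only if'' direction: every closed subquasimodule $P$ of $\mathbf Q$ equals $\prod_{i\in I}p_i(P)$ with each $p_i(P)\in L_C(\mathbf L_i)$, so $P=\Phi\big((p_i(P);i\in I)\big)$.

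For injectivity, suppose $\prod_{i\in I}P_i=\prod_{i\in I}P_i'$ with all factors closed subquasimodules (hence non-empty, since $0$ lies in each). Because each factor is non-empty, projection recovers the factors: $P_j=p_j\big(\prod_{i\in I}P_i\big)=p_j\big(\prod_{i\in I}P_i'\big)=P_j'$ for every $j\in I$. Hence $\Phi$ is a bijection.

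It remains to check that $\Phi$ is an order-isomorphism. If $P_i\subseteq P_i'$ for all $i\in I$, then clearly $\prod_{i\in I}P_i\subseteq\prod_{i\in I}P_i'$. Conversely, if $\prod_{i\in I}P_i\subseteq\prod_{i\in I}P_i'$, then for each $j\in I$ and each $a\in P_j$ we can form the tuple $\vec x\in\prod_{i\in I}P_i$ with $p_j(\vec x)=a$ and $p_i(\vec x)=0\in P_i$ for $i\ne j$ (using $\vec0\in P_i$); by hypothesis $\vec x\in\prod_{i\in I}P_i'$, so $a=p_j(\vec x)\in P_j'$, giving $P_j\subseteq P_j'$. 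Therefore $\Phi$ preserves and reflects the order, establishing the isomorphism $\mathbf L_C(\mathbf Q)\cong\prod_{i\in I}\mathbf L_C(\mathbf L_i)$.

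There is no real obstacle here: the content is already packaged in Theorem~\ref{th3}, and the only point requiring a little care is invoking non-emptiness of the factors (via $\vec 0$) both to recover the $P_i$ from their product and to lift a component $a\in P_j$ back to a tuple in $\prod_{i\in I}P_i$ when checking that $\Phi^{-1}$ is order-preserving.
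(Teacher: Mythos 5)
Your proposal is correct and follows essentially the same route as the paper: both define the map $(P_i;i\in I)\mapsto\prod_{i\in I}P_i$, deduce bijectivity from Theorem~\ref{th3}, and check that containment of products is equivalent to componentwise containment. You merely spell out the non-emptiness argument (via $\vec 0$) that the paper leaves implicit.
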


\begin{proof}
From Theorem~\ref{th3} we see that the mapping $(P_i;i\in I)\mapsto\prod\limits_{i\in I}P_i$ is a bijection from $\prod\limits_{i\in I}L_C(\mathbf L_i)$ to $L_C(\mathbf Q)$. Since for $(P_i;i\in I),(Q_i;i\in I)\in\prod\limits_{i\in I}L_C(\mathbf L_i)$ we have $\prod\limits_{i\in I}P_i\subseteq\prod\limits_{i\in I}Q_i$ if and only if $P_i\subseteq Q_i$ for all $i\in I$, we conclude that this bijection is an isomorphism from $\prod\limits_{i\in I}\mathbf L_C(\mathbf L_i)$ to $\mathbf L_C(\mathbf Q)$.
\end{proof}

We illustrate the concepts and results investigated above in the following example.

\begin{example}\label{ex1}
The lattice $\mathbf N_5$ from Example~\ref{ex2} can be considered as a canonical quasimodule over $\mathbf N_5$. Since $A^\perp=\bigcap\limits_{\vec x\in A}\vec x^\perp$ for all $A\subseteq N_5$ and
\begin{align*}
0^\perp & =\{0,a,b,c,1\}, \\
a^\perp & =\{0,b\}, \\
b^\perp & =\{0,a,c\}, \\
c^\perp & =\{0,b\}, \\
1^\perp & =\{0\},
\end{align*}
we have $L_C(\mathbf N_5)=\{\{0\},\{0,b\},\{0,a,c\},\{0,a,b,c,1\}\}$ and hence $\mathbf L_C(\mathbf N_5)$ is the four-element Boolean algebra. From this and Corollary~\ref{cor1} we conclude that for any positive integer $n$, $\mathbf L_C(\mathbf N_5^n)$ is the $2^{2n}$-element Boolean algebra. Further, the canonical quasimodule $\mathbf Q:=\mathbf N_5\times[0,a]$ over $\mathbf N_5$ has the following subquasimodules:
\begin{align*}
   P_1 & =\{(0,0)\}, \\
   P_2 & =\{(0,0),(0,a)\}, \\
   P_3 & =\{(0,0),(a,0)\}, \\
   P_4 & =\{(0,0),(a,a)\}, \\
   P_5 & =\{(0,0),(b,0)\}, \\
   P_6 & =\{(0,0),(0,a),(a,a)\}, \\
   P_7 & =\{(0,0),(a,0),(a,a)\}, \\
   P_8 & =\{(0,0),(a,0),(c,0)\}, \\
   P_9 & =\{(0,0),(a,a),(c,a)\}, \\
P_{10} & =\{(0,0),(0,a),(a,0),(a,a)\}, \\
P_{11} & =\{(0,0),(0,a),(a,a),(c,a)\}, \\
P_{12} & =\{(0,0),(0,a),(b,0),(b,a)\}, \\
P_{13} & =\{(0,0),(0,a),(a,0),(a,a),(c,a)\}, \\
P_{14} & =\{(0,0),(a,0),(a,a),(c,0),(c,a)\}, \\
P_{15} & =\{(0,0),(a,0),(b,0),(c,0),(1,0)\}, \\
P_{16} & =\{(0,0),(a,a),(b,0),(c,a),(1,a)\}, \\
P_{17} & =\{(0,0),(0,a),(a,0),(a,a),(c,0),(c,a)\}, \\
P_{18} & =\{(0,0),(0,a),(a,a),(b,0),(b,a),(c,a),(1,a)\}, \\
P_{19} & =\{(0,0),(a,0),(a,a),(b,0),(c,0),(c,a),(1,0),(1,a)\}, \\
P_{20} & =\{(0,0),(0,a),(a,0),(a,a),(b,0),(b,a),(c,0),(c,a),(1,0),(1,a)\}.
\end{align*}
Moreover, $\{(0,a),(1,0)\}$ and $\{(0,a),(b,0),(c,0)\}$ are orthogonal bases of $\mathbf Q$ since
\begin{align*}
\langle\{(0,a),(1,0)\}\rangle & =P_{20}, \\
      \langle\{(0,a)\}\rangle & =P_2, \\
      \langle\{(1,0)\}\rangle & =P_{15}
\end{align*}
for the first one and
\begin{align*}
\langle\{(0,a),(b,0),(c,0)\}\rangle & =P_{20}, \\
	  \langle\{(0,a),(b,0)\}\rangle & =P_{12}, \\
	  \langle\{(0,a),(c,0)\}\rangle & =P_8, \\
	  \langle\{(b,0),(c,0)\}\rangle & =P_{15}.
\end{align*}
for the second one. Hence, a quasimodule can have more than one basis and these bases may be of different cardinality contrary to the case of vector spaces.

According to the description of $L_C(\mathbf N_5)$ above, Theorem~\ref{th3} and Corollary~\ref{cor1} we have $L_C(\mathbf Q)=\{P_1,P_2,P_5,P_8,P_{12},P_{15}.P_{17},$ $P_{20}\}$ and $\mathbf L_C(\mathbf Q)$ is the eight-element Boolean algebra. The lattices $\mathbf L(\mathbf Q)$ and $\mathbf L_C(\mathbf Q)$ are visualized in Figure~3 and 4, respectively:

\vspace*{-3mm}

\begin{center}
\setlength{\unitlength}{7mm}
\begin{picture}(11,14)
\put(5,1){\circle*{.3}}
\put(2,3){\circle*{.3}}
\put(4,3){\circle*{.3}}
\put(6,3){\circle*{.3}}
\put(8,3){\circle*{.3}}
\put(4,5){\circle*{.3}}
\put(6,5){\circle*{.3}}
\put(8,5){\circle*{.3}}
\put(10,5){\circle*{.3}}
\put(2,7){\circle*{.3}}
\put(4,7){\circle*{.3}}
\put(6,7){\circle*{.3}}
\put(1,9){\circle*{.3}}
\put(3,9){\circle*{.3}}
\put(5,9){\circle*{.3}}
\put(7,9){\circle*{.3}}
\put(3,11){\circle*{.3}}
\put(5,11){\circle*{.3}}
\put(7,11){\circle*{.3}}
\put(5,13){\circle*{.3}}
\put(5,1){\line(-3,2)3}
\put(5,1){\line(-1,2)3}
\put(5,1){\line(1,2)1}
\put(5,1){\line(3,2)3}
\put(2,3){\line(-1,6)1}
\put(2,3){\line(0,1)4}
\put(2,3){\line(1,6)1}
\put(4,3){\line(1,1)2}
\put(6,3){\line(-1,1)2}
\put(6,3){\line(0,1)4}
\put(6,3){\line(1,1)2}
\put(8,3){\line(0,1)2}
\put(8,3){\line(1,1)2}
\put(4,5){\line(-3,4)3}
\put(4,5){\line(0,1)2}
\put(6,5){\line(-1,1)2}
\put(6,5){\line(0,1)2}
\put(8,5){\line(-1,1)2}
\put(8,5){\line(-1,4)1}
\put(10,5){\line(-7,4)7}
\put(10,5){\line(-3,4)3}
\put(2,7){\line(1,4)1}
\put(4,7){\line(-1,4)1}
\put(4,7){\line(1,2)1}
\put(6,7){\line(-1,2)1}
\put(1,9){\line(1,1)4}
\put(3,9){\line(1,1)2}
\put(5,9){\line(1,1)2}
\put(7,9){\line(0,1)2}
\put(5,11){\line(0,1)2}
\put(7,11){\line(-1,1)2}
\put(4,5){\line(3,4)3}
\put(1,9){\line(2,1)4}
\put(4.75,.3){$P_1$}
\put(1.25,2.85){$P_5$}
\put(3.25,2.85){$P_2$}
\put(6.2,2.85){$P_4$}
\put(8.2,2.85){$P_3$}
\put(3.25,4.85){$P_9$}
\put(6.2,4.85){$P_6$}
\put(8.2,4.85){$P_7$}
\put(10.2,4.85){$P_8$}
\put(1,6.85){$P_{12}$}
\put(3.05,6.85){$P_{11}$}
\put(6.2,6.85){$P_{10}$}
\put(0,8.85){$P_{16}$}
\put(2,8.85){$P_{15}$}
\put(5.2,8.85){$P_{13}$}
\put(7.2,8.85){$P_{14}$}
\put(2,10.85){$P_{18}$}
\put(5.2,10.85){$P_{19}$}
\put(7.2,10.85){$P_{17}$}
\put(4.6,13.4){$P_{20}$}
\put(4.3,-.75){{\rm Fig.~3}}
\put(2.3,-1.75){Lattice $\mathbf L(\mathbf N_5\times[0,a])$}
\end{picture}
\end{center}

\vspace*{8mm}

\begin{center}
\setlength{\unitlength}{7mm}
\begin{picture}(6,8)
\put(3,1){\circle*{.3}}
\put(1,3){\circle*{.3}}
\put(3,3){\circle*{.3}}
\put(5,3){\circle*{.3}}
\put(1,5){\circle*{.3}}
\put(3,5){\circle*{.3}}
\put(5,5){\circle*{.3}}
\put(3,7){\circle*{.3}}
\put(3,1){\line(-1,1)2}
\put(3,1){\line(0,1)2}
\put(3,1){\line(1,1)2}
\put(3,7){\line(-1,-1)2}
\put(3,7){\line(0,-1)2}
\put(3,7){\line(1,-1)2}
\put(3,3){\line(-1,1)2}
\put(3,3){\line(1,1)2}
\put(3,5){\line(-1,-1)2}
\put(3,5){\line(1,-1)2}
\put(1,3){\line(0,1)2}
\put(5,3){\line(0,1)2}
\put(2.75,.3){$P_1$}
\put(.25,2.85){$P_5$}
\put(3.2,2.85){$P_2$}
\put(5.2,2.85){$P_8$}
\put(0,4.85){$P_{12}$}
\put(3.2,4.85){$P_{15}$}
\put(5.2,4.85){$P_{17}$}
\put(2.6,7.4){$P_{20}$}
\put(2.3,-.75){{\rm Fig.~4}}
\put(.3,-1.75){Lattice $\mathbf L_C(\mathbf N_5\times[0,a])$}
\end{picture}
\end{center}

\vspace*{10mm}

The mappings $^\perp$ and $^{\perp\perp}$ on $L(\mathbf Q)$ are described by the following tables:
\[
\begin{array}{l|l|l|l|l|l|l|l|l|l|l}
P              & P_1    & P_2    & P_3    & P_4    & P_5    & P_6    & P_7    & P_8    & P_9    & P_{10} \\
\hline
P^\perp        & P_{20} & P_{15} & P_{12} & P_5    & P_{17} & P_5    & P_5    & P_{12} & P_5    & P_5 \\
\hline
P^{\perp\perp} & P_1    & P_2    & P_8    & P_{17} & P_5    & P_{17} & P_{17} & P_8    & P_{17} & P_{17}
\end{array}
\]
\[
\begin{array}{l|l|l|l|l|l|l|l|l|l|l}
P              & P_{11} & P_{12} & P_{13} & P_{14} & P_{15} & P_{16} & P_{17} & P_{18} & P_{19} & P_{20} \\
\hline
P^\perp        & P_5    & P_8    & P_5    & P_5    & P_2    & P_1    & P_5    & P_1    & P_1    & P_1 \\
\hline
P^{\perp\perp} & P_{17} & P_{12} & P_{17} & P_{17} & P_{15} & P_{20} & P_{17} & P_{20} & P_{20} & P_{20}
\end{array}
\]
Similarly, the restrictions of these mappings to $L_C(\mathbf Q)$ are described in the table below:
\[
\begin{array}{l|l|l|l|l|l|l|l|l}
P              & P_1    & P_2    & P_5    & P_8    & P_{12} & P_{15} & P_{17} & P_{20} \\
\hline
P^\perp        & P_{20} & P_{15} & P_{17} & P_{12} & P_8    & P_2    & P_5    & P_1 \\
\hline
P^{\perp\perp} & P_1    & P_2    & P_5    & P_8    & P_{12} & P_{15} & P_{17} & P_{20} \\
\end{array}
\]
\end{example}

The next result solves the problem when the mapping $^{\perp\perp}$ is a homomorphism from $\mathbf L(\mathbf Q)$ onto $\mathbf L_C(\mathbf Q)$.

\begin{theorem}
Let $\mathbf Q=(Q,+,\cdot,\vec0)=\prod\limits_{i\in I}\mathbf L_i$ be a canonical quasimodule with $0$-distributive $\mathbf L_i$. Then the following hold:
\begin{enumerate}[{\rm(i)}]
\item If $(P\cap R)^{\perp\perp}=P^{\perp\perp}\cap R^{\perp\perp}$ for all $P,R\in L(\mathbf Q)$ then $^{\perp\perp}$ is a homomorphism from $\big(L(\mathbf Q),\vee,\cap,{}^\perp,\{0\},Q\big)$ onto $\big(L_C(\mathbf Q),\stackrel c\vee,\cap,{}^\perp,\{0\},Q\big)$.
\item If $\left(\bigcap\limits_{j\in J}P_j\right)^{\perp\perp}=\bigcap\limits_{j\in J}P_j^{\perp\perp}$ for every family $P_j,j\in J,$ of subquasimodules of $\mathbf Q$ then $^{\perp\perp}$ is a complete homomorphism from $\big(L(\mathbf Q),\vee,\cap,{}^\perp,\{0\},Q\big)$ onto $\big(L_C(\mathbf Q),\stackrel c\vee,\cap,{}^\perp,\{0\},Q\big)$, that is, $\left(\bigvee\limits_{j\in J}P_j\right)^{\perp\perp}=\bigvee\limits_{j\in J}^cP_j^{\perp\perp}$ and $\left(\bigcap\limits_{j\in J}P_j\right)^{\perp\perp}=\bigcap\limits_{j\in J}P_j^{\perp\perp}$ for every family $P_j,j\in J,$ of subquasimodules of $\mathbf Q$.
\end{enumerate}
\end{theorem}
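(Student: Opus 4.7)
The plan is to verify the homomorphism (respectively complete homomorphism) conditions one clause at a time, noting that the Galois-theoretic content of Remark~\ref{rem1} together with Theorem~\ref{th2} automatically delivers preservation of joins, of $^\perp$, and of the constants; the meet condition stated in the theorem is precisely what is needed to make the one remaining operation behave correctly.

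First I would verify well-definedness and the trivial clauses. For each $P\in L(\mathbf Q)$, Proposition~\ref{prop2} and Remark~\ref{rem1}(iii) give $P^{\perp\perp}\in L(\mathbf Q)$ with $(P^{\perp\perp})^{\perp\perp}=P^{\perp\perp}$, so $P^{\perp\perp}\in L_C(\mathbf Q)$ and the map is well defined. It is surjective because every $C\in L_C(\mathbf Q)$ satisfies $C^{\perp\perp}=C$. The constants come from Lemma~\ref{lem4}: $\{\vec0\}^{\perp\perp}=Q^\perp=\{\vec0\}$ and $Q^{\perp\perp}=\{\vec0\}^\perp=Q$. For preservation of $^\perp$, Remark~\ref{rem1}(iii) gives
\[
(P^\perp)^{\perp\perp}=P^{\perp\perp\perp}=P^\perp=(P^{\perp\perp})^\perp.
\]

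Next I would handle join preservation, writing the argument for an arbitrary family $P_j$, $j\in J$, so that the same calculation covers both (i) and (ii). Since each $P_j\subseteq\bigvee\limits_{j\in J}P_j$, Remark~\ref{rem1}(ii) gives $P_j^{\perp\perp}\subseteq\bigl(\bigvee\limits_{j\in J}P_j\bigr)^{\perp\perp}$ for every $j\in J$; because the right-hand side is closed, Theorem~\ref{th2}(iii) yields $\bigvee\limits_{j\in J}^c P_j^{\perp\perp}\subseteq\bigl(\bigvee\limits_{j\in J}P_j\bigr)^{\perp\perp}$. Conversely, $\bigvee\limits_{j\in J}P_j\subseteq\bigvee\limits_{j\in J}^c P_j^{\perp\perp}$ because the right side is a closed subquasimodule containing each $P_j$; applying $^{\perp\perp}$ and using that the right side equals its own double orthogonal gives the reverse inclusion $\bigl(\bigvee\limits_{j\in J}P_j\bigr)^{\perp\perp}\subseteq\bigvee\limits_{j\in J}^c P_j^{\perp\perp}$.

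Finally, meet preservation is exactly the hypothesis stated in each part, so nothing further is required: (i) yields a homomorphism and (ii) a complete one. The proof is essentially bookkeeping and I do not expect any genuine obstacle; the one subtlety to guard against is conflating $\vee$ in $\mathbf L(\mathbf Q)$ with $\stackrel c\vee$ in $\mathbf L_C(\mathbf Q)$, which coincide on closed inputs but not in general, so every supremum-style step must specify carefully which ambient lattice it is taken in.
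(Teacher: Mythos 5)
Your proof is correct and follows essentially the same clause-by-clause strategy as the paper: joins, $^\perp$ and the constants are handled by the Galois-connection machinery of Remark~\ref{rem1}, Lemma~\ref{lem4} and Theorem~\ref{th2}, while the meet clause is exactly the stated hypothesis. The only difference is cosmetic: for join preservation the paper computes the chain of equalities $(P\vee R)^{\perp\perp}=(P\cup R)^{\perp\perp}=(P^\perp\cap R^\perp)^\perp=P^{\perp\perp}\stackrel c\vee R^{\perp\perp}$ using Theorem~\ref{th2}(vi), Lemma~\ref{lem4}(i) and Theorem~\ref{th2}(iii), whereas you obtain the two inclusions from monotonicity of $^{\perp\perp}$ and the universal properties of the two suprema, which amounts to the same thing.
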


\begin{proof}
Let $P,R,P_j\in L(\mathbf Q)$ for every $j\in J$.
\begin{enumerate}[(i)]
\item We have
\begin{align*}
(P\vee R)^{\perp\perp} & =(P\cup R)^{\perp\perp}=(P^\perp\cap R^\perp)^\perp=P^{\perp\perp}\stackrel c\vee R^{\perp\perp}, \\
(P\cap R)^{\perp\perp} & =P^{\perp\perp}\cap R^{\perp\perp}, \\
(P^\perp)^{\perp\perp} & =(P^{\perp\perp})^\perp, \\
    \{0\}^{\perp\perp} & =\{0\}, \\
        Q^{\perp\perp} & =Q.
\end{align*}
\item We have
\begin{align*}
\left(\bigvee_{j\in J}P_j\right)^{\perp\perp} & =\left(\bigcup_{j\in J}P_j\right)^{\perp\perp}=\left(\bigcap_{j\in J}P_j^\perp\right)^\perp=\bigvee_{j\in J}^cP_j^{\perp\perp}, \\
\left(\bigcap_{j\in J}P_j\right)^{\perp\perp} & =\bigcap_{j\in J}P_j^{\perp\perp}, \\
                       (P^\perp)^{\perp\perp} & =(P^{\perp\perp})^\perp, \\
                           \{0\}^{\perp\perp} & =\{0\}, \\
                               Q^{\perp\perp} & =Q.
\end{align*}
\end{enumerate}
\end{proof}

We are interested in the case when for a subquasimodule $(P,+,\cdot,\vec0)$ of a quasimodule $\mathbf Q$, $P^\perp$ is a complement of $P$ in the lattice $\mathbf L(\mathbf Q)$. For this, we introduce the following concept.

\begin{definition}
Let $\mathbf Q=(Q,+,\cdot,\vec0)$ be a canonical quasimodule. A subset $A$ of $Q$ is called {\em splitting} if $A+A^\perp=Q$. Here $A+A^\perp$ means $\{\vec x+\vec y\mid\vec x\in A,\vec y\in A^\perp\}$. Let $L_S(\mathbf Q)$ denote the set of all splitting subquasimodules of $\mathbf Q$ and put $\mathbf L_S(\mathbf Q):=\big(L_S(\mathbf Q),\subseteq\big)$.
\end{definition}

Note that according to (iv) of Lemma~\ref{lem4} we have $A\cap A^\perp=\{0\}$ for every non-empty subset $A$ of $Q$ and hence a subquasimodule $(P,+,\cdot,\vec0)$ of $\mathbf Q$ is splitting if and only if both $P+P^\perp=Q$ and $P\cap P^\perp=\{0\}$.

\begin{example}
Consider the canonical quasimodule $\mathbf Q:=\mathbf N_5\times\{0,a\}$ from Example~\ref{ex1}. Then $L_S(\mathbf Q)=L_C(\mathbf Q)$.
\end{example}

However, the situation from the previous example where splitting subquasimodules coincide with closed ones does not hold in general. We can prove only that every splitting subquasimodule is closed.

\begin{theorem}
Let $\mathbf Q=\prod\limits_{i\in I}\mathbf L_i$ be a canonical quasimodule. Then $L_S(\mathbf Q)\subseteq L_C(\mathbf Q)$.
\end{theorem}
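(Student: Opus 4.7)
My plan is to prove the inclusion $P^{\perp\perp}\subseteq P$ for every splitting $P\in L_S(\mathbf Q)$, since the reverse inclusion $P\subseteq P^{\perp\perp}$ is free from (i) of Remark~\ref{rem1}. So the question reduces to showing that nothing outside $P$ lies in $P^{\perp\perp}$.

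Fix an arbitrary $\vec x\in P^{\perp\perp}$. The splitting hypothesis $P+P^\perp=Q$ lets me write $\vec x=\vec p+\vec q$ for some $\vec p\in P$ and $\vec q\in P^\perp$. The crucial observation is that $\vec x$ is itself orthogonal to $\vec q$: indeed, $\vec x\in P^{\perp\perp}=(P^\perp)^\perp$ and $\vec q\in P^\perp$, so by the very definition of $(P^\perp)^\perp$ we have $\vec x\perp\vec q$. If I can show this forces $\vec q=\vec 0$, then $\vec x=\vec p\in P$ and I am done.

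To extract $\vec q=\vec 0$, I exploit the concrete form of the canonical quasimodule, whose addition is componentwise join. Writing $\vec x=(x_i;i\in I)$, $\vec p=(y_i;i\in I)$ and $\vec q=(z_i;i\in I)$, the equation $\vec x=\vec p+\vec q$ gives $x_i=y_i\vee z_i$, so in particular $z_i\leq x_i$. The orthogonality $\vec x\perp\vec q$ (which, as noted in the Remark following the definition of the inner product, is the pointwise condition $x_i\wedge z_i=0$ and therefore makes sense without any completeness assumption on $\mathbf L$) then yields $x_i\wedge z_i=0$ for all $i\in I$. By absorption, $z_i=z_i\wedge x_i=0$ for every $i$, hence $\vec q=\vec 0$ and $\vec x=\vec p\in P$.

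The only conceptual point that needs care is the step where $\vec q$ is eliminated: one cannot simply cancel $\vec p$ in $\vec x=\vec p+\vec q$, because $(Q,+,\vec 0)$ is merely a commutative monoid. The argument must instead pass to the components, where the interaction between the join (giving $z_i\le x_i$) and the meet (giving $x_i\wedge z_i=0$) collapses $\vec q$ to $\vec 0$ in a single absorption step. Notably, neither $0$-distributivity of the factors $\mathbf L_i$ nor completeness of $\mathbf L$ is invoked, so the inclusion $L_S(\mathbf Q)\subseteq L_C(\mathbf Q)$ holds in full generality for canonical quasimodules.
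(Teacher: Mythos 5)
Your proof is correct and follows essentially the same route as the paper's: decompose $\vec x\in P^{\perp\perp}$ as $\vec p+\vec q$ using the splitting hypothesis, observe $\vec x\perp\vec q$, and kill $\vec q$ componentwise via $z_i=(y_i\vee z_i)\wedge z_i=x_i\wedge z_i=0$. The only difference is presentational (a direct inclusion argument rather than the paper's proof by contradiction), and your remark that neither $0$-distributivity nor completeness is needed matches the hypotheses of the theorem as stated.
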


\begin{proof}
Assume $L_S(\mathbf Q)\not\subseteq L_C(\mathbf Q)$. Then there exists some $P\in L_S(\mathbf Q)\setminus L_C(\mathbf Q)$. Since $P$ is not closed there exists some $\vec x\in P^{\perp\perp}\setminus P$. Because $P$ is splitting there exist $\vec y\in P$ and $\vec z\in P^\perp$ with $\vec y+\vec z=\vec x$. Let $x_i,i\in I,$ denote the components of $\vec x$. Analogous notations we use for the components of $\vec y$ and $\vec z$. Since $\vec y+\vec z=\vec x$ we have $y_i\vee z_i=x_i$ for all $i\in I$. Because $\vec x\in P^{\perp\perp}$ and $\vec z\in P^\perp$ we have $\vec x\perp\vec z$, that is, $x_i\wedge z_i=0$ for all $i\in I$. Together we obtain $z_i=(y_i\vee z_i)\wedge z_i=x_i\wedge z_i=0$ for all $i\in I$, that is, $\vec z=\vec0$ which implies $\vec x=\vec y+\vec z=\vec y\in P$, a contradiction. This shows $L_S(\mathbf Q)\subseteq L_C(\mathbf Q)$.
\end{proof}

That the converse inclusion does not hold in general is demonstrated by the following example.

\begin{example}
Let $\mathbf L$ denote the non-modular $0$-distributive lattice depicted in Figure~5:

\vspace*{-3mm}
	
\begin{center}
\setlength{\unitlength}{7mm}
\begin{picture}(4,10)
\put(2,1){\circle*{.3}}
\put(1,4){\circle*{.3}}
\put(3,3){\circle*{.3}}
\put(3,5){\circle*{.3}}
\put(2,7){\circle*{.3}}
\put(2,9){\circle*{.3}}
\put(2,1){\line(-1,3)1}
\put(2,1){\line(1,2)1}
\put(3,3){\line(0,1)2}
\put(2,7){\line(-1,-3)1}
\put(2,7){\line(1,-2)1}
\put(2,7){\line(0,1)2}
\put(1.85,.3){$0$}
\put(3.4,2.85){$a$}
\put(.35,3.85){$b$}
\put(3.4,4.85){$c$}
\put(2.4,6.85){$d$}
\put(1.85,9.4){$1$}
\put(1.2,-.75){{\rm Fig.~5}}
\put(-2.4,-1.75){Non-modular $0$-distributive lattice $\mathbf L$}
\end{picture}
\end{center}

\vspace*{10mm}

Consider the canonical quasimodule $\mathbf Q:=\mathbf L\times\mathbf L$ over $\mathbf L$. Then $P:=[0,b]\times[0,c]$ is a closed subquasimodule of $\mathbf Q$ since $[0,b]$ and $[0,c]$ are closed subquasimodules of the canonical quasimodule $\mathbf L$ over $\mathbf L$, but $P$ is not splitting since $(1,1)\notin P+P^\perp=([0,b]\times[0,c])+([0,c]\times[0,b])$.
\end{example}

In the following proposition we show that for every splitting subquasimodule $(P,+,\cdot,\vec0)$ of a canonical quasimodule $\mathbf Q$ with $0$-distributive components its orthogonal companion $(P^\perp,+,\cdot,\vec0)$ is again splitting.

\begin{proposition}
Let $\mathbf Q=(Q,+,\cdot,\vec0)=\prod\limits_{i\in I}\mathbf L_i$ be a canonical quasimodule with $0$-distributive $\mathbf L_i$. Then the following hold:
\begin{enumerate}[{\rm(i)}]
\item For every splitting subquasimodule $(P,+,\cdot,\vec0)$ of $\mathbf Q$ the subquasimodule $(P^\perp,+,\cdot,\vec0)$ of $\mathbf Q$ is again splitting.
\item The mapping $^\perp$ is an antitone mapping from $\mathbf L_S(\mathbf Q)$ to $\mathbf L_C(\mathbf Q)$.
\end{enumerate}
\end{proposition}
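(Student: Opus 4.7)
The plan is to reduce both claims to three results already established: the preceding theorem asserting $L_S(\mathbf Q)\subseteq L_C(\mathbf Q)$, Proposition~\ref{prop2}, and Remark~\ref{rem1}. The pivotal observation is that once a splitting subquasimodule $P$ is known to be closed, we have $(P^\perp)^\perp=P^{\perp\perp}=P$; so the decomposition $Q=P+P^\perp$ witnessing that $P$ is splitting, read with the roles of $P$ and $P^\perp$ interchanged, is already the decomposition $Q=P^\perp+(P^\perp)^\perp$ that witnesses $P^\perp$ is splitting.

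For part (i), I first invoke the preceding theorem to get $P^{\perp\perp}=P$, and Proposition~\ref{prop2} (whose $0$-distributivity hypothesis is built into the present proposition) to get $P^\perp\in L(\mathbf Q)$. Then, using commutativity of $+$,
\[
P^\perp+(P^\perp)^\perp=P^\perp+P=P+P^\perp=Q,
\]
so $P^\perp$ is splitting.

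For part (ii) two things must be checked: that $^\perp$ really takes values in $\mathbf L_C(\mathbf Q)$, and that it is antitone. The first is immediate from (i) combined with the preceding theorem: $P\in L_S(\mathbf Q)$ implies by (i) that $P^\perp\in L_S(\mathbf Q)$, and then the preceding theorem gives $P^\perp\in L_C(\mathbf Q)$. Antitonicity is the pure set-theoretic content of Remark~\ref{rem1}(ii), applied to $L_S(\mathbf Q)\subseteq 2^Q$. I do not foresee a genuine obstacle; the only care required is to remember that it is the preceding theorem (splitting $\Rightarrow$ closed), and not the splitting hypothesis by itself, that provides $P^{\perp\perp}=P$ in part (i), and that the $0$-distributivity assumption is precisely what legitimates the application of Proposition~\ref{prop2} here.
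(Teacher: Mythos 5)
Your proof is correct and follows essentially the same route as the paper: swap the roles of $P$ and $P^\perp$ in the decomposition $P+P^\perp=Q$, and obtain closedness of $P^\perp$ plus Remark~\ref{rem1}(ii) for part (ii). The only (harmless) difference is that you invoke the full theorem ``splitting implies closed'' to get $P^{\perp\perp}=P$, whereas the paper gets by with the weaker inclusion $P\subseteq P^{\perp\perp}$ from Remark~\ref{rem1}(i) (so that $Q=P+P^\perp\subseteq P^{\perp\perp}+P^\perp\subseteq Q$), and for part (ii) it reads $P^\perp\in L_C(\mathbf Q)$ directly off Theorem~\ref{th2}(i) rather than via part (i).
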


\begin{proof}
Let $P\in L_S(\mathbf Q)$.
\begin{enumerate}[(i)]
\item We have $P+P^\perp=Q$. According to (i) of Remark~\ref{rem1} we conclude $P^\perp+P^{\perp\perp}=Q$ showing $P^\perp\in P_S(\mathbf Q)$.
\item According to (i) of Theorem~\ref{th2} we have $P^\perp\in L_C(\mathbf Q)$. The rest of the proof follows from (ii) of Remark~\ref{rem1}.
\end{enumerate}
\end{proof}

The final proposition shows that for a canonical quasimodule $\mathbf Q=\prod\limits_{i\in I}\mathbf L_i$ with $\mathbf L_i=(L_i,\vee,\wedge,0)$ for all $i\in I$ a direct product of certain subsets of $L_i$ is a splitting subquasimodule of $\mathbf Q$ if and only if its factors are splitting subquasimodules of $\mathbf L_i$.

\begin{proposition}
Let $\mathbf Q=(Q,+,\cdot,\vec0)=\prod\limits_{i\in I}\mathbf L_i$ with $\mathbf L_i=(L_i,\vee,\wedge,0)$ be a canonical quasimodule and $M_i\subseteq L_i$ for all $i\in I$. Then $\prod\limits_{i\in I}M_i\in L_S(\mathbf Q)$ if and only if $M_i\in L_S(\mathbf L_i)$ for all $i\in I$.
\end{proposition}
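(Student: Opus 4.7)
The plan is to reduce the splitting condition componentwise, using Lemma~\ref{lem6} for the subquasimodule part and Lemma~\ref{lem1} for the orthogonal, and then showing that the operation $+$ of the product quasimodule distributes over direct products of subsets.

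First I would note that both sides of the asserted equivalence entail a subquasimodule condition, so by Lemma~\ref{lem6}(ii) we may work under the assumption that each $M_i$ is a subquasimodule of $\mathbf{L}_i$, in particular that each $M_i$ is non-empty (containing $0$). Under this assumption $p_i\bigl(\prod_{j\in I} M_j\bigr)=M_i$ for every $i$, so Lemma~\ref{lem1} gives
\begin{equation*}
\Bigl(\prod_{i\in I} M_i\Bigr)^{\perp}=\prod_{i\in I} M_i^{\perp}.
\end{equation*}

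The key intermediate step is a distributive identity: for any family of subsets $N_i,N_i'\subseteq L_i$,
\begin{equation*}
\prod_{i\in I} N_i\;+\;\prod_{i\in I} N_i'\;=\;\prod_{i\in I}(N_i+N_i').
\end{equation*}
The inclusion $\subseteq$ is immediate from the componentwise definition of $+$. For $\supseteq$, given $(c_i;i\in I)\in\prod_{i\in I}(N_i+N_i')$, choose (using the Axiom of Choice if $I$ is infinite) witnesses $a_i\in N_i$, $b_i\in N_i'$ with $c_i=a_i\vee b_i$, and observe that $(a_i;i\in I)+(b_i;i\in I)=(c_i;i\in I)$.

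Applied with $N_i=M_i$ and $N_i'=M_i^{\perp}$, this identity turns the splitting condition for $\prod_{i\in I} M_i$ into
\begin{equation*}
\prod_{i\in I}(M_i+M_i^{\perp})=Q=\prod_{i\in I} L_i,
\end{equation*}
and, since every $L_i$ is non-empty, this equality of products holds if and only if $M_i+M_i^{\perp}=L_i$ for every $i\in I$ (one direction by projecting to coordinate $i$, the other by assembling a tuple whose $i$-th coordinate witnesses the strict inclusion). This is precisely the componentwise splitting condition, finishing both directions at once. The only mildly delicate point is the sum-distributes-over-product identity, which rests on the freedom to choose componentwise witnesses independently, but once that is recorded the argument is a direct reduction.
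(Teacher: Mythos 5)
Your proof is correct and follows essentially the same route as the paper's: reduce the subquasimodule condition componentwise via Lemma~\ref{lem6}, compute $\bigl(\prod_{i\in I}M_i\bigr)^{\perp}=\prod_{i\in I}M_i^{\perp}$ via Lemma~\ref{lem1}, and observe that $+$ distributes over the direct product so that splitting of $\prod_{i\in I}M_i$ is equivalent to $M_i+M_i^{\perp}=L_i$ for every $i$. You merely make explicit two small points the paper leaves tacit, namely the componentwise-witness argument behind $\prod_{i\in I}M_i+\prod_{i\in I}M_i^{\perp}=\prod_{i\in I}(M_i+M_i^{\perp})$ and the non-emptiness of each $M_i$ needed for $p_i\bigl(\prod_{j\in I}M_j\bigr)=M_i$.
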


\begin{proof}
Because of Lemma~\ref{lem6} we have $\prod\limits_{i\in I}M_i\in L(\mathbf Q)$ if and only if $M_i\in L(\mathbf L_i)$ for all $i\in I$. Now assume $\prod\limits_{i\in I}M_i\in L(\mathbf Q)$. Then according to Lemma~\ref{lem1} the following are equivalent:
\begin{align*}
                                     \prod_{i\in I}M_i & \in L_S(\mathbf Q), \\	
\prod_{i\in I}M_i+\left(\prod_{i\in I}M_i\right)^\perp & =Q, \\
             \prod_{i\in I}M_i+\prod_{i\in I}M_i^\perp & =Q, \\
                     \prod_{i\in I}(M_i\vee M_i^\perp) & =Q, \\
                                     M_i\vee M_i^\perp & =L_i\text{ for all }i\in I, \\
                                                       M_i & \in L_s(\mathbf L_i)\text{ for all }i\in I.
\end{align*}
\end{proof}








Authors' addresses:

Ivan Chajda \\
Palack\'y University Olomouc \\
Faculty of Science \\
Department of Algebra and Geometry \\
17.\ listopadu 12 \\
771 46 Olomouc \\
Czech Republic \\
ivan.chajda@upol.cz

Helmut L\"anger \\
TU Wien \\
Faculty of Mathematics and Geoinformation \\
Institute of Discrete Mathematics and Geometry \\
Wiedner Hauptstra\ss e 8-10 \\
1040 Vienna \\
Austria, and \\
Palack\'y University Olomouc \\
Faculty of Science \\
Department of Algebra and Geometry \\
17.\ listopadu 12 \\
771 46 Olomouc \\
Czech Republic \\
helmut.laenger@tuwien.ac.at
\end{document}